\newtheorem{thm}{Theorem}[section]
\newtheorem{lemma}[thm]{Lemma}
\newtheorem{remark}[thm]{Remark}
\newtheorem{example}[thm]{Examples}
\numberwithin{equation}{section}
\def\Ex {{\mathbb E}}
\def\R {{\mathbb R}}
\def\Z {{\mathbb Z}}
\def\rd {{\mathrm d}}
\def\F {{\mathcal F}}
\def\P {{\mathbb P}}
\def\Q {{\mathbb Q}}
\def\wt{\widetilde}
\def\wh{\widehat}
\def\op{\mathcal L}
\def\1{{\mathbf 1}}
\def\<{{\langle}}
\def\>{{\rangle}}
\def\wh{\widehat}
\def\proof{{\medskip\noindent {\bf Proof. }}}
\def\qed{{\hfill $\square$ \bigskip}}
\def\Levy {{L\'evy\ }}
\def\Schrodinger {{Schr\"odinger\ }}
\begin{document}
\title{Inverse local time of one-dimensional diffusions and its comparison theorem}

\author{{Zhen-Qing Chen} \quad \hbox{and} \quad {Lidan Wang}}

\date{\today }

\maketitle

\begin{abstract}
 In this paper, we study inverse local time at 0 of one-dimensional reflected diffusions on $[0, \infty)$,
and establish a comparison principle for inverse local times. Applications to Green function estimates for non-local operators are given. 
\end{abstract}

\bigskip
\noindent {\bf AMS 2010 Mathematics Subject Classification}: Primary
60J55,  60J75; Secondary   60J75, 60H10

\bigskip\noindent
{\bf Keywords and phrases}: diffusion, local time, inverse local time, subordinator,
L\'evy measure, non-local operator, Esscher transform, Girsanov transform,  comparison theorem,
Green function estimate

\section{Introduction}
\label{sec:intr}

It is well known that the trace of Brownian motion in $\R^{d+1}$ (or reflected Brownian motion in the upper half space $\R^{d+1}_+:=\{x=(x_1, \dots, x_d, x_{d+1})\in \R^{d+1}: x_{d+1}>0\}$)
 on the hyperplane $\{x_{d+1}=0\}$ is a $d$-dimensional Cauchy process.
Molchanov and Ostrovski \cite{MO} in 1969 showed that in fact any rotationally symmetric 
$\alpha$-stable process
on $\R^d$ with $0<\alpha <2$ can be realized as the boundary trace on $\{x_{d+1}=0\}$ 
of some reflected diffusion in the upper half space $\R_+^{d+1}$. In terms of the generators, Molchanov and Ostrovski's result says that 
the fractional Laplacian $\Delta^{\alpha/2}:=-(-\Delta)^{\alpha/2}$ in $\R^{d+1}$ can be realized
as the boundary trace of some (degenerate) differential operator in upper half space of one-dimensional higher. The latter fact is rediscovered by Caffarelli and Silvestre \cite{CS} in 2007 using a purely analytic approach.
Realizing non-local operators as boundary trace of some differential operators is a powerful way to study non-local operators 
from analytic point of view as one can employ many well developed techniques and ideas from partial differential equations 
(PDE). 
It is a natural and interesting question to investigate the scope of  non-local operators that can be realized
as the boundary trace of differential operators.

Note that rotationally symmetric stable process has the same distribution as Brownian motion time-changed by an independent stable subordinator. So the key to Molchanov and Ostrovski's result is to show that $(\alpha/2)$-stable subordinator can be realzed as the inverse local time at 0 of some one-dimensional reflected diffusion on $[0, \infty)$.  Indeed,  
Molchanov-Ostrovski \cite{MO} showed that for $0<\alpha<1$, $\alpha$-stable subordinator can be realized as the inverse local time of a reflecting Bessel process on $[0, \infty)$ determined locally by the generator
\begin{equation} \label{e:1.1}
\op^{(\alpha)}=\frac{1}{2}\frac{d^2}{dx^2}+\frac{1-2\alpha}{2x}\frac{d}{dx}.
\end{equation}

The class of subordinate Brownian motions that can be realized as boundary traces of diffusion processes in upper half space
of one-dimensional higher is in one-to-one correspondence with the class of subordinators 
that can be realized as the inverse local time
at 0 of some reflected diffusions on the half line $[0, \infty)$. The latter is exactly a question raised in It\^{o}-McKean \cite{IM} called Krein representation problem. This problem remains open.
However,  Knight \cite{Knight} and Kotani-Watanabe \cite{KW} showed independently in 1981-1982 that 
if one relaxes $\R_+$-valued diffusions to gap diffusions (a family of Markov processes having possibly discontinuous trajactories), then the answer is affirmative. It is a general fact that the inverse local time of a Markov process
as a point having positive capacity is always a subordinator, that is, a non-decreasing real-valued L\'evy process.

Relativistic Cauchy process in $\R^d$ (also called relativistic Brownian motion in the study of relativistic Hamiltonian system in physics \cite{CMS}) is a subordinate Brownian motion $X_t$ characterized by 
$$
\Ex e^{ i\xi \cdot (X_t-X_0)} = e^{ t \left( \sqrt{m}-\sqrt{ m+|\xi|^2 } \right) }, \quad \xi \in \R^d , 
$$  
where $m>0$ stands for the mass of the particle. The  infinitesimal generator of $X_t$ is $\sqrt{m}-\sqrt{m-\Delta}$. 
It is not hard to see that the inverse local time at $0$ of 
the reflected Brownian motion with downward constant drift $\sqrt{2m}$ on $[0, \infty)$
is a subordinator $S_t$ with $S_0=0$ and 
$$ \Ex e ^{-\lambda S_t} = e^{ t \left( \sqrt{m}-\sqrt{m+\lambda } \right)}, \quad \lambda \geq 0.
$$
 Hence the relativisitc Cauchy process on $\R^d$ can be regarded as the boundary trace
on $\{x_{d+1}=0\}$ in the upper half space of $\R^{d+1}$ 
where the vertical motion in the $x_{d+1}$ direction is a Brownian motion with downward constant drift while the horizontal motion is an independent Brownian motion in $\R^d$. 
However for general $\alpha \not= 1/2$, relativistic $\alpha$-stable processes can not be simply realized,
by analogy with rotationally symmetric stable processes,  
as a diffusion in the upper half space of one-dimensional high whose vertical motion is a Bessel process with constant drift.

 By using Esscher transform, Martin and Yor\cite{Yor1} showed that relativistic $\alpha$-subordinator for $0<\alpha<1$
is in fact the inverse local time at $0$ of the reflected diffusion on $[0, \infty)$ determined by generator 
\begin{equation}\label{e:1.2}
\op^{(\alpha,m)}=\frac{1}{2}\frac{d^2}{dx^2}+\bigg(\frac{1-2\alpha}{2x}+\frac{\wh K'_\alpha(\sqrt{2m}x)}{\wh K_\alpha(\sqrt{2m}x)}\bigg)\frac{d}{dx},
\end{equation} 
where $\wh K(x)=x^\alpha K_{\alpha}(x)$ and $K_\alpha(x)$ is the modified Bessel function of the second kind defined in \eqref{eqn:Kalpha}.

In this paper, we set out a modest goal to investigate properties of the inverse local time at $0$ of 
reflected diffusions on $[0, \infty)$ with infinitesimal generator of the form
\begin{equation}\label{e:1.3}
\op=\frac{1}{2}\frac{d^2}{d x^2}+\left(\frac{1-2\alpha}{2x}-f(x)\right)\frac{d}{d x},
\end{equation}
where $f\geq 0$ is a function on $(0, \infty)$, and the corresponding subordinate Brownian motions.

For two functions $f$ and $g$, notation $f\lesssim g$ means there is a constant $c>0$
so that $f\leq c g$. The following are the main results of this paper.

\begin{thm}\label{T:1.1}
Let $Y_t$ be the reflected diffusion process on $[0,\infty)$ determined by the local generator
of the form \eqref{e:1.3} with 
$$
0\leq f(x)\lesssim (1\wedge x)^{2\alpha -1} \quad \hbox{on } (0, \infty) .
$$
Let $S_t$ be the inverse local time of $Y_t$ at $0$. Then there is a constant $m>0$ so that
stochastically 
$S_t^{(\alpha,m)}\leq S_t\leq S^{(\alpha)}_t$ for all $t\geq 0$,
where $S^{(\alpha)}_t$ and $S_t^{(\alpha,m)}$ are the $\alpha$-stable subordinator and
relativistic $\alpha$-stable subordinator with mass $m$, respectively. 
\end{thm}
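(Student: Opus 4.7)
The plan is to reduce the claim to the comparison principle for inverse local times of reflected one-dimensional diffusions on $[0,\infty)$ with common diffusion coefficient but different drifts, which is the technical heart of the paper (announced in the abstract). Specifically, I would use the principle in the form: if two drifts satisfy $b_1\le b_2$ pointwise on $(0,\infty)$, then the inverse local times of the corresponding reflected diffusions satisfy $S^{(b_1)}_t\le S^{(b_2)}_t$ in stochastic order for every $t\ge 0$ (the process with more negative drift accumulates local time at $0$ faster).

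The upper bound $S_t\le S^{(\alpha)}_t$ is then immediate: because $f\ge 0$, the drift $(1-2\alpha)/(2x)-f(x)$ of $Y$ is pointwise dominated by the Bessel drift $(1-2\alpha)/(2x)$ of $Y^{(\alpha)}$. For the lower bound $S^{(\alpha,m)}_t\le S_t$, I would first use the identity $\wh K_\alpha'(y)=-y^{\alpha} K_{\alpha-1}(y)$ to rewrite the drift of $Y^{(\alpha,m)}$ as $(1-2\alpha)/(2x)-K_{\alpha-1}(\sqrt{2m}\,x)/K_\alpha(\sqrt{2m}\,x)$, so that the required drift comparison reduces to exhibiting an $m>0$ for which
\begin{equation*}
f(x)\ \le\ \frac{K_{\alpha-1}(\sqrt{2m}\,x)}{K_\alpha(\sqrt{2m}\,x)} \qquad \text{for all } x>0.
\end{equation*}

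I would verify this inequality via the asymptotics of the modified Bessel function of the second kind: near the origin, $K_{\alpha-1}(y)/K_\alpha(y)\sim c_\alpha\, y^{2\alpha-1}$, which matches $f(x)\lesssim x^{2\alpha-1}$ near $0$ as soon as $m$ is chosen so that $c_\alpha(\sqrt{2m})^{2\alpha-1}$ beats the implicit constant in the hypothesis on $f$; as $y\to\infty$ the ratio tends to $1$, which matches $f(x)\lesssim 1$ away from $0$. The main obstacle I expect is to reconcile these two regimes with a single parameter $m$ and to cover the intermediate range of $x$. For this I would exploit monotonicity and an integral representation of $y\mapsto K_{\alpha-1}(y)/K_\alpha(y)$, and possibly split the argument into the cases $0<\alpha<1/2$ and $1/2\le \alpha<1$ since the sign of $2\alpha-1$ controls how the relativistic bound scales with $m$ (large $m$ in one case, small $m$ in the other). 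Once such an $m$ is in hand, the comparison principle applied to $Y^{(\alpha,m)}$ and $Y$ delivers $S^{(\alpha,m)}_t\le S_t$ and completes the proof.
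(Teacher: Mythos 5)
Your overall sandwich strategy is the same as the paper's: since $f\ge 0$, the drift of $Y$ lies below the Bessel drift, and one looks for $m$ with $f(x)\le -\rho_m'(x)/\rho_m(x)$ so that the drift of $Y$ lies above that of $X^{(\alpha,m)}$; then one applies a comparison theorem for inverse local times to both pairs. The genuine gap is the form in which you invoke that comparison theorem. The principle "pointwise drift ordering implies stochastic ordering of inverse local times" is not what the paper establishes: Theorem \ref{thm:com} carries the additional hypothesis \eqref{con:Gir}, i.e.\ a Girsanov-type integrability condition guaranteeing that the two reflected diffusions are mutually absolutely continuous. This hypothesis is not decorative. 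Pathwise comparison only yields the inclusion of zero sets $Z^Y\subset Z^X$; to pass from that to an ordering of local times (and hence of their inverses) one must compare local times of two \emph{different} processes, each defined only up to its own normalization, and the paper does this by identifying $(L^X,\Q)$ with $(L^Y,\P)$ through the exact Girsanov martingale together with the Fristedt--Pruitt Hausdorff-measure representation of local time. Consequently, the bulk of the paper's proof of Theorem \ref{T:1.1} consists of verifying \eqref{con:Gir} for the pair $(X^{(\alpha)},Y)$: trivial when $1/2\le\alpha<1$ (then $f$ is bounded), but requiring explicit estimates with the Bessel transition density and the asymptotics of $I_{-\alpha}$ when $0<\alpha<1/2$, since $f(x)\lesssim x^{2\alpha-1}$ is then unbounded near $0$. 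For the lower bound the paper does not check \eqref{con:Gir} directly for $(X^{(\alpha,m)},Y)$ either, but deduces mutual absolute continuity of $X^{(\alpha)}$ and $X^{(\alpha,m)}$ from the Esscher/Girsanov transform of Theorem \ref{thm:EG} and combines it with the first step. Your proposal contains no argument of this kind, so the appeal to the comparison principle is unsupported as stated.

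A secondary, fixable slip: the drift of $X^{(\alpha,m)}$ is $\frac{1-2\alpha}{2x}-\sqrt{2m}\,\frac{K_{\alpha-1}(\sqrt{2m}x)}{K_\alpha(\sqrt{2m}x)}$; you dropped the prefactor $\sqrt{2m}$. With the correct expression, the two asymptotic thresholds in \eqref{eqn:asymp}, namely $\frac{m^\alpha\Gamma(1-\alpha)}{2^{\alpha-1}\Gamma(\alpha)}x^{2\alpha-1}$ as $x\to 0+$ and $\sqrt{2m}$ as $x\to\infty$, are \emph{both} increasing in $m$, so a single sufficiently large $m$ (chosen so that $c_1\le \sqrt{2m}\wedge \frac{m^\alpha\Gamma(1-\alpha)}{2^{\alpha-1}\Gamma(\alpha)}$) dominates the constant in $f\lesssim(1\wedge x)^{2\alpha-1}$ in both regimes; the tension you anticipate between a "large $m$" case and a "small $m$" case, and the proposed case split in $\alpha$, is an artifact of the missing factor rather than a real obstacle.
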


As an application,  we have the following Green function estimates for the trace processes of diffusion processes in $\R^{d+1}$ whose vertical $x_{d+1}$-coordinate is a reflected diffusion on $[0, \infty)$ with infinitesimal generator \eqref{e:1.3}, and the horizontal direction is an independent Brownian motion
in $\R^d$.

\begin{thm} \label{T:1.2}
Under the setting of Theorem \ref{T:1.1}, let $B_t$ be a $d$-dimensional Brownian motion independent of $Y_t$
with variance $2t$, and $\mu(x)$ be the density of the \Levy measure of trace process $B_{S_t}$. 
Denote the density of the \Levy measure of symmetric $2\alpha$-stable process  by $\mu^{(\alpha)}(x)$. Then $j(x) :=\mu^{(\alpha)}(x)-\mu(x)\geq 0$, and there exists a constant $C$ such that for $|x|\leq 1$,
$$j(x)\leq C|x|^{2-2\alpha -d}.$$
Let $D\subset\R^d$ be a bounded connected Lipschitz open set. Denote Green functions of the  trace process $B_{S_t}$ in $D$ by $G_D(x,y)$. Then there exists a constant $C_1=C_1( d, \alpha, D,C)$ such that
$$
C_1^{-1} G^{(2\alpha)}_D(x,y)\leq G_D(x,y)\leq C_1G^{(2\alpha)}_D(x,y)
\text{ for }  x,y\in D,
$$
where $G^{(2\alpha)}_D$ is the Green function of rotationally symmetric $(2\alpha)$-stable process,
or equivalently of the fractional Laplacian $\Delta^\alpha$,  in $D$.
\end{thm}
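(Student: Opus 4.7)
The plan is to proceed in three stages. First, I would upgrade the stochastic dominance of subordinators in Theorem~\ref{T:1.1} to pointwise comparison of their L\'evy measures,
\begin{equation*}
\nu^{(\alpha,m)}(dt) \leq \nu_S(dt) \leq \nu^{(\alpha)}(dt) \quad \text{on } (0,\infty),
\end{equation*}
where $\nu_S$ denotes the L\'evy measure of $S_t$. Since all three subordinators are drift-free inverse local times at a regular point, the coupling that produces the pathwise dominance in Theorem~\ref{T:1.1} (in particular the Esscher transform invoked in the relativistic case) should also supply pointwise dominance of the L\'evy measures; note that this is strictly stronger than the automatic dominance of Laplace exponents.

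Second, the subordination formula
\begin{equation*}
\mu(x) = \int_0^\infty (4\pi t)^{-d/2} e^{-|x|^2/(4t)} \, \nu_S(dt),
\end{equation*}
together with the analogous identities for $\mu^{(\alpha)}$ and $\mu^{(\alpha,m)}$, transfers the comparison above into $\mu^{(\alpha,m)}(x) \leq \mu(x) \leq \mu^{(\alpha)}(x)$, which immediately yields $j(x)\geq 0$. For the quantitative bound I would use $j(x) \leq \mu^{(\alpha)}(x) - \mu^{(\alpha,m)}(x)$ together with the explicit densities $\nu^{(\alpha)}(dt) = c_\alpha t^{-1-\alpha}\,dt$ and $\nu^{(\alpha,m)}(dt) = c_\alpha t^{-1-\alpha} e^{-mt}\,dt$ to write
\begin{equation*}
j(x) \leq c_\alpha \int_0^\infty (4\pi t)^{-d/2} e^{-|x|^2/(4t)} \, t^{-1-\alpha} \bigl(1 - e^{-mt}\bigr) dt.
\end{equation*}
For $|x|\leq 1$, split at $t=1$: on $(0,1]$ the inequality $1-e^{-mt}\leq mt$ together with the substitution $u = |x|^2/(4t)$ extracts the scale-invariant factor $|x|^{2-2\alpha-d}$, while on $[1,\infty)$ the integrand is dominated by $C\,t^{-d/2-1-\alpha}$ with finite integral, producing a constant bound.

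For the Green function estimate, since both $\mu^{(\alpha)}(x)$ and $\mu^{(\alpha,m)}(x)$ behave like $|x|^{-d-2\alpha}$ near the origin, the sandwich $\mu^{(\alpha,m)}\leq \mu\leq \mu^{(\alpha)}$ forces $\mu(x)\asymp |x|^{-d-2\alpha}$ on $|x|\leq \mathrm{diam}(D)\vee 1$. Since $D$ is bounded and Lipschitz, one can then invoke standard comparison theorems for Green functions of symmetric pure-jump Markov processes whose L\'evy densities are comparable to that of the $(2\alpha)$-stable process on the relevant scale, to conclude the two-sided bound against $G_D^{(2\alpha)}$.

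I expect the main obstacle to lie in the first step: passing from stochastic dominance to pointwise L\'evy measure dominance is not a formal consequence of dominance of Laplace exponents (the map $\phi\mapsto\nu$ is not monotone), so this step must be argued directly from the coupling or Esscher-transform structure in the proof of Theorem~\ref{T:1.1}. A secondary subtlety is that $\mu$ and $\mu^{(\alpha)}$ are only locally comparable — globally the tail of $\mu$ may decay substantially faster than $|x|^{-d-2\alpha}$ — so the exit contribution $\int_{D^c}\mu(y-z)\,dz$ will need careful handling before the Green function comparison result can be applied cleanly on $D$.
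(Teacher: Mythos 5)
Your overall route coincides with the paper's: establish $\nu^{(\alpha,m)}\leq\nu\leq\nu^{(\alpha)}$ at the level of L\'evy measures of the subordinators, push this through the subordination formula to get $0\leq j(x)\leq \mu^{(\alpha)}(x)-\mu^{(\alpha,m)}(x)$, bound the latter using $(\nu^{(\alpha)}-\nu^{(\alpha,m)})(dt)=c_\alpha t^{-1-\alpha}(1-e^{-mt})\,dt$ and $1-e^{-mt}\leq mt$ with the substitution $s=|x|^2/(4t)$, and finish by quoting the Grzywny--Ryznar theorem (Theorem \ref{thm:GR}) with $\rho=2-2\alpha$. Your second and third stages are exactly the paper's computation; the only cosmetic difference is that the paper applies $1-e^{-mt}\le mt$ on all of $(0,\infty)$ rather than splitting at $t=1$. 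Your worry about the global tail of $\mu$ is unnecessary: Theorem \ref{thm:GR} only requires $j\ge 0$ on $\R^d$ together with the bound $j(x)\le c|x|^{\rho-d}$ for $|x|\le 1$, so no control of $\mu$ away from the origin is needed.

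The gap you flag in your first stage is real, but it is exactly the content of the paper's Theorem \ref{T:3.4}, which is the intended input here; your proposed substitute would not close it. The paper's mechanism is: (a) the pathwise domination of the diffusions gives inclusion of their zero sets, and Bertoin's regenerative embedding theorem then shows that the ratio $\phi^Y/\phi^X$ of the Laplace exponents is completely monotone (Lemma \ref{lem:lap}); (b) the stochastic domination $S^X_t\le S^Y_t$ of Theorem \ref{thm:com} gives $\phi^X\le\phi^Y$; combining, $\phi^Y/\phi^X-1\ge 0$ is completely monotone, whence $\phi^Y-\phi^X=(\phi^Y/\phi^X-1)\phi^X$ is the Laplace exponent of a subordinator and therefore $\nu_Y-\nu_X\ge 0$. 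Your suggested alternative of reading the domination off ``the coupling or Esscher-transform structure'' works only for the explicit pair $(\nu^{(\alpha)},\nu^{(\alpha,m)})$, where indeed $\nu^{(\alpha,m)}(dt)=e^{-mt}\nu^{(\alpha)}(dt)$; it says nothing about the unknown measure $\nu$ of $S_t$, because the Esscher transform relates $X^{(\alpha)}$ to $X^{(\alpha,m)}$, not to $Y$. So applied as written, your first step remains open, and you should instead invoke Theorem \ref{T:3.4} for each of the two pairs $(X^{(\alpha,m)},Y)$ and $(Y,X^{(\alpha)})$. One further small point: the $s$-integral $\int_0^\infty s^{d/2+\alpha-2}e^{-s}\,ds$ appearing after the substitution converges only when $d/2+\alpha>1$; for $d=1$ and $\alpha\le 1/2$ your split at $t=1$ yields only a bounded $j$ near the origin, which still suffices for Theorem \ref{thm:GR} (with $\rho=d$) but is weaker than the displayed exponent $|x|^{2-2\alpha-d}$.
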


The rest of the paper is organized as follows. 
 Esscher transform (an Economics terminology) and Girsanov transform for reflected diffusions
on $[0, \infty)$ are discussed in Section \ref{sec:EG}.
This extends and refines the corresponding part of Martin-Yor \cite{Yor1} with a more complete and rigorous proof. 
In Section \ref{sec:com}, we present a key comparison result for the inverse local times at $0$
for reflected diffusion processes on $[0, \infty)$ and their corresponding L\'evy measures. 
  Hausdorff measure of zero sets and Girsanov transform are the main tools to get this result. Regenerative embedding theory for subordinators 
are also used.
With the comparison result obtained in Section \ref{sec:com},  Theorems \ref{T:1.1}
and \ref{T:1.2} are established in Section \ref{S:4}.

\section{Esscher  transforms}
\label{sec:EG}

Recall that the Laplace exponent and L\'evy measure for $\alpha$-stable subordinator,
where $0<\alpha <1$,  
are $\phi^{(\alpha)}(\lambda)=c_\alpha\lambda^\alpha$ and $\nu^{(\alpha)}(dx):=c_\alpha x^{-1-\alpha} dx$, where $c_\alpha = \alpha/\Gamma (1-\alpha)$;
while that for relativisitic $\alpha$-stable subordinator with mass $m>0$ are $\phi^{(\alpha,m)}(\lambda)=c_\alpha[(m +\lambda)^{\alpha}-m^\alpha]$ and  $\nu^{(\alpha, m)} (dx):= c_{\alpha }x^{-1-\alpha} e^{-mx} dx$. 
 
 Fix $0<\alpha <1$. 
 Let $(\Omega,\F, \mathbb P)$ be a probability space on which a reflected Bessel process $Y_t$ on 
$[0, \infty)$ with generator \eqref{e:1.1} is defined.  The filtration generated by $Y_t$ will be denoted 
as $\{\F_t; t\geq 0\}$.
Let $L_t$ be the local time of $Y$ at $0$, and 
$$ 
S_t:= \inf\{s>0: L_s >t\} , 
$$
the inverse of $L$, which is a   stopping time with respect to the filtration
$\{\F_t; t\geq 0\}$. 
We know that $S_t$ is an $\alpha$-stable subordinator. 
We can define a new probability measure $\Q$ on $(\Omega,\F)$ by 
\begin{equation}\label{e:2.1} 
\frac{\Q(dx)}{\P (dx)} = \frac{e^{-mx}}{\Ex_0[\exp(-mS_t  ]}  
= \exp(tc_\alpha m^\alpha-mx) \quad \hbox{ on } \F_{S_t}.
\end{equation}
This change of measure is called Esscher transform in literature (see Chapter VII, 3c, \cite{E}). 
Note that  under the new probability measure $\Q$,
\begin{align}
\Ex^\Q [\exp(-\lambda S_t ]=&\exp(tc_\alpha m^\alpha)\Ex^\P[\exp(-(\lambda+m)S_t ]\notag\\
=&\exp(tc_\alpha m^\alpha-t\phi^{(\alpha)(\lambda+m)})\notag\\
=&\exp(-t\phi^{(\alpha,m)}(\lambda)).
\end{align}
In other words, under $\Q $, $\{S_t ; t\geq 0\}$ is  a relativistic $\alpha$-stable subordinator with mass $m$.

\medskip

We now  extend the above Esscher transform to   general one-dimensional diffusions. 

\begin{thm}
\label{thm:EG}
Suppose that $X_t$ is a reflected diffusion process on $[0, \infty)$ defined on a probability space   $(\Omega,\F, \mathbb P)$, 
determined locally by the generator 
$$\op=a(x)\frac{d^2}{d x^2}+b(x)\frac{d}{d x}.$$ 
Let $L_t$ be the local time of $X_t$ at $0$, and $S_t=\inf\{s:L_s>t\}$ its  inverse local time at $0$, which is a subordinator.
Denote by $\phi(\lambda)$ the  Laplace exponent of $\{S_t; t\geq 0\}$.  Define
\begin{equation} \label{e:2.3}
\frac{d \Q }{d \P }:=\frac{\exp(-mS_t)}{\Ex [\exp(-mS_t)]} \quad \hbox{on  }\F_{S_t}, \ t\geq 0. 
\end{equation}
Then
\begin{itemize}
\item[\rm(i)]  \eqref{e:2.3}  defines a new measure $\Q $ on $\F_\infty$ in a consistent way;

\item[\rm(ii)] Under $\Q$, the original diffusion $X$, write as $X^{(m)}$ for emphasis, is a reflected diffusion on $[0, \infty)$
having  generator
$$
\op^{(m)}=\op+2a(x)\frac{\rho'_m(x)}{\rho_m(x)}\frac{d}{d x}, \ x>0, 
$$
where $\rho_m(x):=\Ex_x[\exp(-mT_0)]$ and $T_0$ is the first hitting time of $0$ by the process $X_t$.
 
\item[\rm(iii)] Denote by $S_t^{(m)}$  the inverse local time of $X_t^{(m)}$ at $0$ and $\phi^{(m)}(\lambda)$ the Laplace exponent
for subordinator $S_t^{(m)}$. Then we have
$$
\phi^{(m)}(\lambda)=\phi(\lambda+m)-\phi(m).
$$
\end{itemize}
\end{thm}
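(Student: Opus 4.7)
The plan is to handle the three parts in order, with the substantive work concentrated in part (ii).

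For part (i), I would exploit the subordinator property of $S$ with respect to the time-changed filtration $\{\F_{S_t}\}$. By strong Markov and stationary increments, for $s<t$,
\begin{equation*}
\Ex^\P\bigl[e^{-mS_t}\,\big|\,\F_{S_s}\bigr] \;=\; e^{-mS_s}\,\Ex^\P\bigl[e^{-mS_{t-s}}\bigr] \;=\; e^{-mS_s}\,e^{-(t-s)\phi(m)} .
\end{equation*}
Dividing by $\Ex^\P[e^{-mS_t}]=e^{-t\phi(m)}$ shows that the prescribed densities are projectively consistent across the family $\{\F_{S_t}\}$. Since $S_t\uparrow\infty$ almost surely (as $0$ is recurrent), $\F_{S_t}\uparrow\F_\infty$, so Kolmogorov/Parthasarathy extension produces a unique probability $\Q$ on $\F_\infty$.

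For part (ii), the strategy is to pass from a density on $\F_{S_t}$ to a density on $\F_u$ for deterministic $u$, then use Itô and Girsanov. On $\{u\le S_t\}=\{L_u<t\}$, the strong Markov property at $u$ together with the regenerative structure of $(X,L)$ at $0$ gives
\begin{equation*}
\Ex^\P\bigl[e^{-mS_t}\,\big|\,\F_u\bigr] \;=\; e^{-mu}\,\rho_m(X_u)\,e^{-(t-L_u)\phi(m)} ,
\end{equation*}
since starting from $X_u$ the chain first reaches $0$ (contributing $\rho_m(X_u)$) and then runs an independent inverse local time of duration $t-L_u$. Normalizing yields the bounded $\P$-martingale
\begin{equation*}
M_u \;=\; \rho_m(X_u)\,\exp\!\bigl(-mu+L_u\phi(m)\bigr), \qquad u\le S_t .
\end{equation*}
Applying Itô's formula to $M_u$ using the reflected SDE $dX_u=\sqrt{2a(X_u)}\,dB_u+b(X_u)\,du+dL_u$, and using that $\rho_m$ solves $\op\rho_m=m\rho_m$ on $(0,\infty)$ with $\rho_m(0)=1$, the $du$-terms cancel; the $dL_u$-coefficient collapses to $\rho_m'(0+)+\phi(m)$, which vanishes by the classical identity $\rho_m'(0+)=-\phi(m)$ relating the derivative of the Laplace transform of $T_0$ to the exponent of the inverse local time. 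Hence $dM_u/M_u=(\rho_m'/\rho_m)(X_u)\sqrt{2a(X_u)}\,dB_u$, and Girsanov's theorem rewrites the SDE under $\Q$ with an extra drift $2a(X_u)\,\rho_m'(X_u)/\rho_m(X_u)$, matching the generator $\op^{(m)}$. The reflecting behavior at $0$ is preserved precisely because the $dL_u$-coefficient vanished.

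Part (iii) is then immediate: local time is a pathwise functional of the trajectory (through, e.g., Tanaka's formula or the excursion decomposition), so its normalization is unaffected by the absolutely continuous change of measure and $S^{(m)}_t=S_t$ pathwise. Consequently
\begin{equation*}
\Ex^\Q\bigl[e^{-\lambda S_t^{(m)}}\bigr] \;=\; \frac{\Ex^\P[e^{-(\lambda+m)S_t}]}{\Ex^\P[e^{-mS_t}]} \;=\; e^{-t(\phi(\lambda+m)-\phi(m))} ,
\end{equation*}
giving $\phi^{(m)}(\lambda)=\phi(\lambda+m)-\phi(m)$. The main obstacle is the boundary identity $\rho_m'(0+)=-\phi(m)$ and the clean execution of Itô/Girsanov at the reflecting endpoint when $a$ and $b$ are not smooth and $b$ may be singular at $0$ (as in \eqref{e:1.1}). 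I would handle this by localizing with the hitting times $T_\varepsilon=\inf\{u:X_u\le\varepsilon\}$, where $\rho_m$ is smooth and $L$ does not grow, deriving all identities on $(\varepsilon,\infty)$, and then passing to $\varepsilon\downarrow 0$ via the resolvent/excursion formula that links $\rho_m'(0+)$ to the Laplace exponent of $S$.
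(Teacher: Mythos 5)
Your parts (i) and (iii), and your derivation of the density $M_u=\rho_m(X_u)\exp(-mu+\phi(m)L_u)$ through the strong Markov property and the regenerative splitting $S_t=u+S_{t-L_u}\circ\theta_u$ on $\{u\le S_t\}$, are essentially the paper's own argument (the paper keeps the factor $\rho_m(x)^{-1}$ for a general starting point and establishes uniform integrability of the family $M_{u,t}=\Ex[M_u\mid\F_{u\wedge S_t}]$ rather than invoking a Kolmogorov--Parthasarathy extension, but this is a difference of packaging, not of substance). The genuine problem is in your execution of (ii).

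You write the reflected diffusion as $dX_u=\sqrt{2a(X_u)}\,dB_u+b(X_u)\,du+dL_u$ and dispose of the boundary term via the ``classical identity'' $\rho_m'(0+)=-\phi(m)$. Both ingredients fail precisely for the diffusions this theorem is applied to. For the Bessel generator \eqref{e:1.1} with $\alpha\ne 1/2$ the reflected process does not admit a Skorokhod-type decomposition with a $+dL_u$ term (for $\alpha>1/2$ it is not even a semimartingale near $0$, and for $\alpha<1/2$ no local-time term is present), and the paper's own asymptotics \eqref{eqn:asymp} give $\rho_m'(x)/\rho_m(x)\sim -c\,x^{2\alpha-1}$ as $x\to 0+$, so $\rho_m'(0+)$ equals $0$ or $-\infty$, never $-\phi(m)$. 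The correct relation is through the scale function, $\phi(m)=-\frac{d\rho_m}{ds}(0+)$, and it is tied to the chosen normalization of $L$, which the theorem does not fix via Tanaka's formula; your closing localization plan would indeed recover the drift $2a\rho_m'/\rho_m$ on $(0,\infty)$, but the limiting step as stated still leans on the same false identity, so the cancellation of the $dL_u$-coefficient is not established. The paper avoids this entirely: once the density on $\F_u$ is identified as in \eqref{eqn:gir3}, it is read as a Doob $h$-transform by $\rho_m$ combined with a Feynman--Kac transform (killing at rate $m$ and creation $\phi(m)\,dL_s$ at the boundary), and the generator is obtained algebraically from $\op^{(m)}f=\rho_m^{-1}(\op-m)(\rho_m f)=\op f+2a\frac{\rho_m'}{\rho_m}f'$ for $x>0$, using only $(\op-m)\rho_m=0$ and never the value $\rho_m'(0+)$. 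To repair your route, either argue as the paper does, or restate the boundary cancellation in terms of the scale derivative together with an explicit normalization of the local time.
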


\proof The definition (\ref{e:2.3}) gives 
\begin{equation}
\label{eqn:gir2}
M_{s,t}:=\frac{d\Q }{d\P }\bigg|_{\F_{s\wedge S_t}}=\frac{\Ex [\exp(-mS_t)|\F_{s\wedge S_t}]}{\Ex [\exp(-mS_t)]}.
\end{equation}
To see the consistency, we observe for $r\leq t$, since $S_t$ is a subordinator,
\begin{align*}
\frac{d\Q }{d\P }\bigg|_{\F_{s\wedge S_r}}=&\frac{d\Q }{d\P }\bigg|_{\F_{s\wedge S_r\wedge S_t}}=\frac{\Ex [\exp(-mS_t)|\F_{s\wedge S_r\wedge S_t}]}{\Ex [\exp(-mS_t)]}\\
=&\frac{\Ex [\exp(-mS_t)|\F_{s\wedge S_r}]}{\Ex [\exp(-mS_t)]}\\
=&\frac{\Ex [\exp(-mS_r)\exp(-m(S_t-S_r))|\F_{s\wedge S_r}]}{\Ex [\exp(-mS_r)\exp(-m(S_t-S_r)]}\\
=&\frac{\Ex [\exp(-m(S_t-S_r))]\Ex [\exp(-mS_r)|\F_{s\wedge S_r}]}{\Ex [\exp(-m(S_t-S_r))]\Ex [\exp(-mS_r)]}\\
=&\frac{\Ex [\exp(-mS_r)|\F_{s\wedge S_r}]}{\Ex [\exp(-mS_r)]}
\end{align*}
To see uniform integrability, we first have the decomposition of the inverse local time at $0$, since $S_0=T_0$, 
\begin{align}
\label{eqn:inv}
S_t\text{ under }\P =&\inf\{s:L_s>t \text{ with }X_0=x\}\notag\\
=&\inf\{s:s=T_0+r,L_{T_0}+L_r\circ\theta_{T_0}>t\text{ with }X_0=x\}\notag\\
=&T_0+\inf\{r:L_r\circ\theta_{T_0}>t\text{ with }X_0=x\}\notag\\
=&T_0+\inf\{r:L_r>t\text{ with }X_0=0\}\notag\\
=&T_0+S_t\text{ under }\P_0
\end{align}
With the decomposition,
\begin{align}
\label{eqn:exp}
\Ex [\exp(-mS_t)]=&\Ex [e^{-mT_0}]\Ex_0[e^{-mS_t}]\notag\\
=&\rho_m(x)\Ex_0[e^{-mS_t}]\notag\\
=&\rho_m(x)\exp[-t\phi(m)]
\end{align}
Similarly,  
\begin{align*}
\1_{\{s\leq S_t\}}\Ex [e^{-mS_t}|\F_{s}]=&\1_{\{s\leq S_t\}}\Ex [e^{-mS_t}|\F_s]\\
=&\1_{\{s\leq S_t\}}e^{-ms}\Ex_{X_s}[\exp(-mS_{t-r})]|_{r=L_s}
\end{align*}
The last equation holds because on $\{s\leq S_t\}$,
\begin{align*}
S_t=&\inf\{r+s:L_{r+s}>t\}=s+\inf\{r:L_s+L_r\circ\theta_s>t\}\\
=&s+\inf\{r:L_r\circ\theta_s>t-L_s\}\\
=&s+S_{t-r}\circ\theta_s|_{r=L_s}.
\end{align*}
Now using (\ref{eqn:exp}), we have
\begin{align*}
\Ex_{X_s}[\exp(-mS_{t-r})]|_{r=L_s}=&\rho_m(X_s)\Ex_0[\exp(-mS_{t-r})]|_{r=L_s}\\
=&\rho_m(X_s)\exp(-(t-r)\phi(m))|_{r=L_s}\\
=&\rho_m(X_s)\exp(-(t-L_s)\phi(m)).
\end{align*}
Thus, restricted to $\F_{s\wedge S_t}$,
\begin{align*}
M_{s,t}=&\frac{d\Q }{d\P }\bigg|_{\F_{s\wedge S_t}}=\1_{\{s\leq S_t\}} \frac{\Ex [\exp(-mS_t)|\F_{s}]}{\Ex [\exp(-mS_t)]}+\1_{\{s>S_t\}}\frac{\exp(-mS_t)}{\Ex [\exp(-mS_t)]}\\
=&\1_{\{s\leq S_t\}}\frac{e^{-ms}\rho_m(X_s)\exp(-(t-L_s)\phi(m))}{\rho_m(x)\exp(-t\phi(m))}+\1_{\{s>S_t\}}\frac{\exp(-mS_t)}{\Ex [\exp(-mS_t)]}\\
=&\1_{\{s\leq S_t\}}\frac{\rho_m(X_s)}{\rho_m(x)}\exp(-ms+L_s\phi(m))+\1_{\{s>S_t\}}\frac{\exp(-mS_t)}{\Ex [\exp(-mS_t)]}
\end{align*}
As $t\to\infty$, $M_{s,t}\to M_s$, a.s. and 
$$M_s:=\frac{\rho_m(X_s)}{\rho_m(x)}\exp(-ms+L_s\phi(m)).$$
It's obvious that $M_s\in\F_s$. Also, from the original definition of $M_{s,t}$ in (\ref{eqn:gir2}), for any $s,t$, $\Ex  M_{s,t}\leq 1$, so by Fatou's lemma,
$$\Ex M_s\leq\liminf\Ex M_{s,t}\leq 1.$$
Thus, $M_s\in L^1$, and on the other hand, with $\F_{s\wedge S_t}\subset\F_s$
\begin{align*}
\Ex [M_s|\F_{s\wedge S_t}]=&\Ex \bigg[\frac{\rho_m(X_s)}{\rho_m(x)}\exp(-ms+L_s\phi(m))\Big|\F_{s\wedge S_t}\bigg]\\
=&\1_{\{s\leq S_t\}}\frac{\rho_m(X_s)}{\rho_m(x)}\exp(-ms+L_s\phi(m))+\1_{\{s>S_t\}}\frac{\rho_m(0)}{\rho_m(x)}\exp(-mS_t+t\phi(m)])\\
=&\1_{\{s\leq S_t\}}\frac{\rho_m(X_s)}{\rho_m(x)}\exp(-ms+L_s\phi(m))+\1_{\{s>S_t\}}\frac{\exp(-mS_t)}{\Ex (\exp(-mS_t))}\\
=&M_{s,t},
\end{align*} 
the second to the last equality comes from $\rho_m(0)=1$ and (\ref{eqn:exp}). Thus $\{M_{s,t}=\Ex [M_s|\F_{s\wedge S_t}]\}_{t\geq0}$ is uniformly integrable. Taking $t\to \infty$ yields 
\begin{equation} \label{eqn:gir3}
\frac{d\Q }{d\P }\bigg|_{\F_s}=\frac{\rho_m(X_s)}{\rho_m(x)}\exp(-ms+L_s\phi(m)).
\end{equation}
The above is a combination of Doob's $h$-transform and a Feynman-Kac transform by local time $L_t$.
It follows that for $x>0$,
\begin{align*}
	\op^{(m)}f(x)=&\rho_m^{-1}(x)(\op-m)(\rho_m\cdot f)(x)\\
	=&\op f(x)+2a(x)\frac{\rho'_m(x)}{\rho_m(x)}f'(x)+\rho_m^{-1}(x)(\op-m)(\rho_m)(x)f(x)
\end{align*} 
Since $\rho_m(x)$ satisfies $(\op-m)\rho_m(x)=0$, 
under the new measure $\Q$, the diffusion process $X_t$ is a reflected diffusion on $[0, \infty)$ with generator 
$$\op^{(m)}=\op+2a(x)\frac{\rho'_m(x)}{\rho_m(x)}\frac{d}{d x}, \text{ for }x>0. $$
By \eqref{e:2.3},   for every $\lambda >0$, 
$$
\Ex^\Q e^{-\lambda S_t}=\frac{\Ex e^{-(\lambda +m)S_t}}{\exp(-t\phi(m))}=\exp\{-t(\phi(\lambda+m)-\phi(m))\}.
$$
This proves that  the Laplace exponent of $S^{(m)}_t$ is $\phi_m(\lambda)=\phi(\lambda+m)-\phi(m)$.
\qed

\begin{remark} \label{rmk:EG} \rm
By Feymann-Kac transformation, $\rho_m(x)=\Ex [\exp(-mT_0)]$ is the unique solution to 
$$\begin{cases}
(\op-m)\rho_m=0;\\
\rho_m(0)=1,\ \rho_m(\infty)=0.
\end{cases}$$
\end{remark}


\section{Comparison theorem for inverse local time}
\label{sec:com}

Let   $X_t$ and $Y_t$ be reflected diffusion processes on $[0, \infty)$ defined on a probability space $(\Omega, \F, \P)$ and driven by a 
common Brownian motion, whose  generators are 
\begin{align*}
&\op^X=a(x)\frac{d^2}{d x^2}+b(x)\frac{d}{d x};\\
&\op^Y=a(x)\frac{d^2}{d x^2}+B(x)\frac{d}{d x}.
\end{align*}
Denote by $Z^X$ and $ Z^Y$  the  zero sets for  $X$ and $Y$ respectively; that is,
$$
Z^X:=  {\{t\in[0,\infty): X_t=0\}} \quad \hbox{ and } \quad 
Z^Y:= {\{t\in[0,\infty): Y_t=0\}}.
$$
 These are random closed subsets of $[0, \infty)$, and are regenerate (also called Markov) sets
 in the sense of Maisonneuve (cf \cite{B}).

 \begin{lemma}
\label{lem:lap}
Let  $S_t^X$ and $ S_t^Y$ be inverse local times at $0$ for $X_t, Y_t$,   whose Laplace exponents are denoted by 
$\phi^X(\lambda)$, $\phi^Y(\lambda)$, respectively. 
Suppose that $b(x)\leq B(x)$ for all $x$. 
Then $\phi^Y/\phi^X$ is a completely monotone function.
\end{lemma}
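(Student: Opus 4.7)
The plan is to deduce the lemma from three ingredients: a pathwise comparison of $X$ and $Y$, the regenerative embedding theorem for subordinators, and an elementary completely monotone identity for Bernstein functions. Since $X$ and $Y$ share the diffusion coefficient $a(x)$, are driven by the same Brownian motion, and satisfy $b\leq B$, the pathwise comparison theorem for one-dimensional stochastic differential equations, extended to the Skorohod-reflected setting at $0$, yields $X_t\leq Y_t$ for all $t\geq 0$ almost surely. Combined with $X,Y\geq 0$, this forces the zero-set inclusion $Z^Y\subseteq Z^X$.

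Both $Z^X$ and $Z^Y$ are closed regenerative subsets of $[0,\infty)$ containing $0$, whose closed ranges are the images of the subordinators $S^X$ and $S^Y$ respectively, once the local times $L^X$ and $L^Y$ are normalized consistently by the Hausdorff-measure gauge associated with each zero set. Applied to the inclusion $Z^Y\subseteq Z^X$, the Maisonneuve--Bertoin regenerative embedding theorem produces a subordinator $\sigma=(\sigma_t)_{t\geq 0}$ independent of $S^X$ such that
$$S^Y_t \;=\; S^X_{\sigma_t}, \qquad t\geq 0.$$
Taking Laplace transforms, conditioning on $\sigma_t$, and using independence yields the composition identity $\phi^Y(\lambda)=\phi^\sigma(\phi^X(\lambda))$. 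Writing $\phi^\sigma(\mu)=d\mu+\int_0^\infty (1-e^{-\mu s})\,\nu^\sigma(ds)$, Fubini gives
$$\frac{\phi^\sigma(\mu)}{\mu} \;=\; d \;+\; \int_0^\infty e^{-\mu s}\, \nu^\sigma([s,\infty))\, ds,$$
which is the Laplace transform of a non-negative measure on $[0,\infty)$ and hence completely monotone in $\mu$. Because complete monotonicity is preserved under composition with a Bernstein function on the right, it follows that
$$\frac{\phi^Y(\lambda)}{\phi^X(\lambda)} \;=\; \frac{\phi^\sigma(\phi^X(\lambda))}{\phi^X(\lambda)}$$
is completely monotone as a function of $\lambda$.

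The hard part will be step 2: verifying that $Z^X$ and $Z^Y$ are genuinely regenerative in a common filtration, matching the normalizations of $L^X$ and $L^Y$ so that the embedding identity holds pathwise rather than merely in law, and establishing the critical independence of $\sigma$ from $S^X$. This is where the two tools highlighted in the introduction to Section \ref{sec:com} do the real work: the Hausdorff-measure normalization supplies a canonical, filtration-free definition of each local time directly from the sample path, while the Girsanov transform realizes the laws of $X$ and $Y$ on a common probability space so that $Z^Y\subseteq Z^X$ becomes an almost sure statement on which the regenerative embedding can be explicitly constructed and the law of $\sigma$ identified.
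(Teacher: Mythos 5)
Your proposal follows essentially the same route as the paper: the one-dimensional comparison theorem gives $X_t\leq Y_t$ and hence $Z^Y\subseteq Z^X$ almost surely, and then Bertoin's regenerative embedding theorem yields the complete monotonicity of $\phi^Y/\phi^X$. The only difference is that you unpack the content of Bertoin's theorem (the subordination identity $\phi^Y=\phi^\sigma\circ\phi^X$ and the complete monotonicity of $\phi^\sigma(\mu)/\mu$), which the paper simply invokes as a black box; your flagged concerns about joint regeneration and normalization are exactly the hypotheses handled inside Bertoin's Theorem 1.
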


\proof  If   $X_0\leq Y_0$, then by the comparison theorem for one-dimensional diffusions (see, e.g., 
\cite[Theorem I.6.2]{Bass}), 
we have, almost surely, $X_t\leq Y_t$ for all $t\geq 0$. 
Consequently,   \begin{equation}
\label{eqn:z}
Z^Y\subset Z^X, \ \P\text{-a.s.}
\end{equation}
Note that  $S_t^X$ and $ S_t^Y$ are the subordinators associated with the regenerative sets $Z^X$ and $Z^Y$, respectively.
It follows from the regenerative embedding theorem due to Bertoin (see  \cite[Theorem 1]{B}) that
  $\phi^Y/\phi^X$ is a completely monotone function. \qed

\begin{example} \label{ex:stablecom}   \rm
Consider two reflected Bessel processes on $[0, \infty)$: $X_t^{(\alpha)}, X_t^{(\beta)}$,   determined by local generators:
\begin{align*}
&\op^{(\alpha)}=\frac{1}{2}\frac{d^2}{d x^2}+\frac{1-2\alpha}{2x}\frac{d}{d x};\\
&\op^{(\beta)}=\frac{1}{2}\frac{d^2}{d x^2}+\frac{1-2\beta}{2x}\frac{d}{d x},
\end{align*}
where $0<\beta<\alpha<1$ and $X_0^{(\alpha)}\leq X_0^{(\beta)}$. Denote by $S_t^{(\alpha)}, S_t^{(\beta)}$ the inverse local times at $0$, and $\phi^{(\alpha)}, \phi^{(\beta)}$ their Laplace exponents. 

Since $\frac{1-2\alpha}{2x}<\frac{1-2\beta}{2x}$ for $x>0$, apply the classic comparison theorem for one-dimensional SDE
and Lemma \ref{lem:lap}, $\phi^{(\beta)}/\phi^{(\alpha)}$ is completely monotone.

On the other hand,  as we know from \cite{MO}, $S_t^{(\alpha)}, S_t^{(\beta)}$ are $\alpha$- and $\beta$-stable subordinators, respectively. 
Since $\phi^{(\alpha)}(\lambda)=c_\alpha \lambda^\alpha$, $\phi^{(\beta)}(\lambda)=c_\beta\lambda^\beta$,   $\phi^{(\beta)} (\lambda )/\phi^{(\alpha)} (\lambda ) 
=(c_\beta/c_\alpha) \lambda^{\beta -\alpha}$ is indeed completely monotone in $\lambda$.
\end{example}

In general one can not conclude that the inverse local time at 0 of $Y$ is dominated by that of $X$. 
Indeed, there is no monotonicity between $\alpha$-stable and $\beta$-stable subordinators,
as there is no monotonicity between their Laplace exponents. However we have the following
comparison theorem for inverse local times. 

 \begin{thm}
\label{thm:com}
Suppose  $X_t$ and $Y_t$ defined on a probability space $(\Omega,\F, \mathbb P)$ are reflected diffusions on $[0, \infty)$,
 determined by the local generator
\begin{align*}
&\op^X=\frac{1}{2}\frac{d^2}{d x^2}+b(x)\frac{d}{d x};\\
&\op^Y=\frac{1}{2}\frac{d^2}{d x^2}+B(x)\frac{d}{d x}.
\end{align*}
Let  $S_t^X$ and $S_t^Y$ be the corresponding inverse local times at $0$,  respectively.  
Suppose  $f(x)=B(x)-b(x) \geq 0$ satisfies the condition
\begin{equation}
\label{con:Gir}
\sup_{x>0}\Ex_x\bigg[\int_0^{T}|f(X_t)|^2d t\bigg]<\infty,\ \text{ for any fixed time  }T>0.
\end{equation}
 Then stochastically,
 $S_t^X\leq S_t^Y$ for all $t\geq 0$.
\end{thm}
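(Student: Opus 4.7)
The plan is to put $X$ and $Y$ on a common probability space, use a pathwise comparison theorem to nest their zero sets, and then match the two local times via a common Hausdorff-measure representation of the zero set so that set inclusion becomes a pointwise inequality of local times.

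\textbf{Step 1 (coupling and nesting zero sets).} On a space carrying a Brownian motion $W$, solve the two reflected SDEs $dX_t=b(X_t)\,dt+dW_t+dL^X_t$ and $dY_t=B(Y_t)\,dt+dW_t+dL^Y_t$, both reflected upward at $0$ with $X_0=Y_0=0$. Since $B=b+f\ge b$, the classical one-dimensional comparison theorem for reflected SDEs driven by a common Brownian motion yields $0\le X_t\le Y_t$ for every $t\ge 0$ almost surely, and hence the pathwise inclusion $Z^Y\subset Z^X$ of zero sets.

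\textbf{Step 2 (Girsanov and equivalence of laws).} Define $M_t=\exp\!\big(\int_0^t f(X_s)\,dW_s-\tfrac12\int_0^t f(X_s)^2\,ds\big)$. Under the uniform $L^2$ hypothesis \eqref{con:Gir}, a standard localization-plus-Novikov argument shows $M$ is a true $\P$-martingale on every $[0,T]$, so $d\Q/d\P|_{\F_T}=M_T$ defines an equivalent probability measure $\Q\sim\P$ on $\F_T$ under which $X$ is a reflected diffusion with generator $\op^Y$. Thus the laws of $X$ and of $Y$ on path space up to time $T$ are mutually absolutely continuous, so every $\P$-a.s. geometric invariant of $Z^X$ is shared by $Z^Y$. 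Combined with the regenerative embedding theory of \cite{B}, this identifies a single deterministic exact Hausdorff gauge $g$ governing the fine structure of both $Z^X$ and $Z^Y$, read off the common small-scale behavior of their inverse local times.

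\textbf{Step 3 (Hausdorff identification and conclusion).} Invoke the Taylor--Wendel/Perkins-type pathwise representation that identifies the local time at $0$ of a one-dimensional reflected diffusion with a constant multiple of the $g$-Hausdorff measure of its zero set: $L^X_t=c\,\mu_g(Z^X\cap[0,t])$ and $L^Y_t=c\,\mu_g(Z^Y\cap[0,t])$ with the same $c$, fixed by the common principal part $\tfrac12 d^2/dx^2$ (equivalently, the Revuz normalization at $0$). Monotonicity of $\mu_g$ together with $Z^Y\subset Z^X$ from Step 1 gives $L^Y_t\le L^X_t$ for all $t$ almost surely, and inverting these continuous nondecreasing processes yields $S^X_t\le S^Y_t$ almost surely, which a fortiori gives stochastic dominance.

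The principal obstacle lies in Step 3: making the pathwise Hausdorff-measure representation rigorous and verifying that both the gauge $g$ \emph{and} the normalizing constant $c$ are genuinely shared between $X$ and $Y$. Step 1 is classical and Step 2 requires only routine localization to exploit \eqref{con:Gir}; the nontrivial content is combining the Girsanov equivalence from Step 2 with the regenerative embedding of \cite{B} to pin down a common $g$, and a direct Revuz-measure computation at $0$ to pin down a common $c$. Once these are in hand, the geometric inclusion $Z^Y\subset Z^X$ immediately delivers the desired comparison $S^X_t\le S^Y_t$.
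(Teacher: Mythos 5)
Your overall architecture---pathwise comparison to nest the zero sets, Girsanov to relate the laws of $X$ and $Y$, and a Hausdorff-measure representation of local time to turn the inclusion $Z^Y\subset Z^X$ into an inequality of local times---is exactly the strategy of the paper's proof. Steps 1 and 2 match the paper (the paper verifies uniform integrability of the exponential martingale under \eqref{con:Gir} by citing a Khasminskii-type criterion rather than Novikov, but that is a technical detail).

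The genuine gap is the one you yourself flag at the end of Step 3, and the route you propose for closing it does not work as stated. Bertoin's regenerative embedding theorem yields complete monotonicity of $\phi^Y/\phi^X$ (that is Lemma \ref{lem:lap}, used later for the L\'evy-measure comparison); it does not produce a common Hausdorff gauge, and a ``Revuz computation at $0$'' does not obviously identify a common normalizing constant either, since the two diffusions have different drifts near $0$. The paper closes the gap as follows. Fristedt--Pruitt supplies a gauge $g$ with $g\text{-}m(Z^X\cap[0,t])=L^X_t$ $\P$-a.s., for the particular normalization of $L^X$ fixed by Maisonneuve's identity
\begin{equation*}
\Ex_x\Big[\int_t^\infty e^{-s}\,dL^X_s\,\Big|\,\F_t\Big]=\Ex_x\big[e^{-T_0\circ\theta_t}\,\big|\,\F_t\big].
\end{equation*}
The key observation is that the normalizing martingale $M_t=e^{-T_0}\1_{\{T_0\le t\}}-\int_0^t e^{-s}\,dL^X_s$ is purely discontinuous and of finite variation, hence has vanishing covariation with the continuous Girsanov exponent $\int_0^\cdot f(X_s)\,dB_s$; therefore $M$ remains a martingale under $\Q$, the same normalization identity holds under $\Q$, and consequently $(L^X_t,\Q)\overset{d}{=}(L^Y_t,\P)$ with matching normalizations. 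Only then does the chain $(L^Y_t,\P)\overset{d}{=}(g\text{-}m(Z^X\cap[0,t]),\Q)\overset{d}{=}(g\text{-}m(Z^Y\cap[0,t]),\P)\le(g\text{-}m(Z^X\cap[0,t]),\P)=(L^X_t,\P)$ go through. Note that this chain delivers only the asserted \emph{stochastic} dominance, since several links are equalities in law; your Step 3 claims an almost-sure pathwise inequality that the argument does not directly give and that the theorem does not require. Without this orthogonality argument (or some substitute), the identification of a common $(g,c)$ remains unproved and the proof is incomplete.
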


\proof 
We first define a Girsanov transform between $X_t$ and $Y_t$,
$$
\frac{d \Q}{d \P}\bigg|_{\F_t}=\exp\bigg[\int_0^t f(X_s)d B_s-\frac{1}{2}\int_0^tf^2(X_s)d s\bigg].
$$
Note that due to condition \eqref{con:Gir}, by \cite[Theorem 3.2]{Chen},   the right hand side   of the above is a uniformly integrable martingale. 
 Thus with $Z^X$, $Z^Y$ as zero sets, we have the relations
\begin{equation}
\label{eqn:zero}
(X_t,\Q)\overset{d}{=}(Y_t,\P)\Rightarrow (Z^X,\Q)\overset{d}{=}(Z^Y,\P).
\end{equation}
In other words, under $\Q$, $X_t$ can be viewed as $Y_t$. This leads to the same properties for zero sets of $X_t$ and $Y_t$.

Now let  $L_t^X$ be a choice of the local time for $X_t$ at $0$ such that $L_t^X$ satisfies (cf. Theorem X.2. in \cite{M}) 
\begin{equation}
\label{eqn:lt}
\Ex_x\bigg[\int_t^\infty e^{-s}d L_s^X\bigg |\F_t\bigg]=\Ex_x\big[e^{-T_0\circ\theta_t}|\F_t\big],
\end{equation}
where $T_0$ is the first hitting time at $0$ for $X_t$. We claim that 
$M_t\triangleq\{e^{-T_0}\1_{\{T_0\leq t\}}-\int_0^te^{-s}d L_s^X;t\geq 0\}$ is a $\P$-martingale with respect to the filtration
$\{\F_t; t\geq 0\}$.   This is because for every $t\geq r$, 
\begin{align*}
\Ex_x[M_t|\F_r]=&M_r+\Ex_x\bigg[e^{-T}\1_{\{r<T\leq t\}}-\int_r^te^{-s}d L_s^X\bigg|\F_r\bigg]\\
=&M_r+\Ex_x\bigg[e^{-T}\1_{\{r<T\leq t\}}-\int_r^te^{-s}d L_s^X\bigg|\F_r\bigg]+\Ex_x\bigg[e^{-T\circ\theta_t}-\int_t^\infty e^{-s}d L_s^X\bigg |\F_r\bigg]\\
=&M_r+\Ex_x\bigg[e^{-T\circ\theta_r}-\int_r^\infty e^{-s}d L_s^X\bigg |\F_r\bigg]\\
=&M_r, 
\end{align*}
where the last equality is due to \eqref{eqn:lt}. 
This proves the claim that 
$\{M\}_t$ is a martingale with respect to $\{\F_t\}_{\{t\geq0\}}$. Clearly, it is purely discontinuous martingale of finite variation. 

Applying the same Girsanov transform to $M_t$, $M_t-[M,\int_0^\cdot f(X_s)d B_s]_t$ is a $\Q$-martingale. Since $M_t$ is a purely discontinuous martingale of finite variation and $\int_0^tf(X_s)d B_s$ is continuous, $[M,\int_0^\cdot f(X_s)d B_s]_t=0$, $M_t$ is a $\Q$-martingale as well. Thus,
$$\Ex_x^\Q [e^{-T_0}\1_{\{T_0\leq t\}}]=\Ex_x^\Q\bigg[\int_0^te^{-s}d L_s^X\bigg]$$
By letting $t\to\infty$, one can have
$$\Ex_x^\Q[e^{-T_0}]=\Ex_x^\Q\bigg[\int_0^\infty e^{-s}d L_s^X\bigg].$$
Thus, we  get the relation that $(L_t^X, \mathbb Q)\overset{d}{=}(L_t^Y,\mathbb P)$.

Fristedt-Pruitt showed in \cite{FP} there exists an increasing function $g$ such that 
$$g\text{-}m(S^X[0,t])=t,  $$
where the left hand side represents the Hausdorff measure of the range of $S^X$ on the time interval $[0,t]$ with respect to the function $g$. Since the closure of the range for $S^X$ is $Z^X$, it follows that $g\text{-}m(Z^X\cap[0,t])=L_t^X$, $\mathbb P\text{-a.s.}$ and 
$$(g\text{-}m(Z^X\cap[0,t]);\Q)=(L_t^X,\Q)\overset{d}{=}(L_t^Y,\P).$$
Also, by the classic comparison theorem, $X_t\leq Y_t$ almost surely for all $t$, we  have $Z^X\supset Z^Y$, $\P$-a.s. Together with (\ref{eqn:zero}), 
\begin{align*}
(L_t^Y,\P)=(g\text{-}m(Z^X\cap[0,t]);\Q)=&(g\text{-}m(Z^Y\cap[0,t]);\P)\\
\leq& (g\text{-}m(Z^X\cap[0,t]);\P)\\
=&(L_t^X,\P)
\end{align*}
The conclusion of the theorem now follows. 
\qed

Denote by $\mu_X$ and $\mu_Y$ the L\'evy measure for the subordinators $S^X_t$ and $S^Y_t$, respectively. 
The following is a comparison theorem on L\'evy measures.

\begin{thm}
\label{T:3.4}
Suppose $X_t$ and $Y_t$ are reflected diffusions on $[0, \infty)$ as  in Theorem \ref{thm:com}, $\phi^{X}$ and $\phi^{Y}$ are the Laplace exponents of inverse local times, respectively. Then $\phi^Y-\phi^X \geq 0$ is completely monotone and, consequently, $\mu_X \leq \mu_Y$.
\end{thm}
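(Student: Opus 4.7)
The non-negativity $\phi^Y - \phi^X \geq 0$ is immediate from Theorem~\ref{thm:com}: the stochastic dominance $S^X_t \leq S^Y_t$ combined with the identity $\Ex[e^{-\lambda S_t}]=e^{-t\phi(\lambda)}$ yields $\phi^X(\lambda)\leq \phi^Y(\lambda)$ for every $\lambda\geq 0$.

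For the complete-monotonicity assertion---which, in view of the stated consequence $\mu_X\leq \mu_Y$ on L\'evy measures, must be read as ``$\phi^Y-\phi^X$ is a Bernstein function, i.e., the Laplace exponent of some subordinator''---the stochastic comparison alone is \emph{not} enough: two subordinators with $S^X\leq S^Y$ stochastically may easily have incomparable L\'evy measures (take, e.g., $X$ a unit-rate Poisson process with unit jumps and $Y=2X$). I would therefore work inside the Girsanov coupling built in the proof of Theorem~\ref{thm:com}, under which $X_t\leq Y_t$ for every $t$ almost surely, so that $Z^Y\subseteq Z^X$ a.s.\ and each excursion interval of $Y$ away from $0$ is a finite concatenation of excursion intervals of $X$ interspersed with sub-intervals of $Z^X$ on which $X=0$ but $Y>0$. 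Using It\^o excursion theory at $0$ for both processes, together with the strong Markov property of the pair $(X,Y)$ at the successive returns of $Y$ to $0$, I would split the Poisson point process governing the jumps of $S^Y$ into (i) a component whose reordered excursion lengths reproduce $S^X$ in distribution, and (ii) an independent residual Poisson point process accounting for the ``excess'' portions of the $Y$-excursions. This would exhibit a decomposition $S^Y\stackrel{d}{=}S^X+R$ with $R$ a subordinator independent of $S^X$, whence $\phi^Y-\phi^X=\phi^R$ is a Bernstein function. The conclusion $\mu_X\leq \mu_Y$ then follows from uniqueness of the L\'evy--Khintchine representation: matching $\phi^R=a_R+b_R\lambda+\int(1-e^{-\lambda x})\nu_R(dx)$ against $\phi^Y-\phi^X=(a_Y-a_X)+(b_Y-b_X)\lambda+\int(1-e^{-\lambda x})(\mu_Y-\mu_X)(dx)$ forces $\nu_R=\mu_Y-\mu_X\geq 0$ as a measure.

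The main obstacle is the extraction of $R$ as an \emph{independent} subordinator. Lemma~\ref{lem:lap} provides only complete monotonicity of the ratio $\phi^Y/\phi^X$, which is logically weaker than the Bernstein property of the difference $\phi^Y-\phi^X$; the product/composition heuristics $\phi^Y-\phi^X=\phi^X(\phi^Y/\phi^X-1)$ do not by themselves imply the Bernstein property, since Bernstein times completely monotone is not Bernstein in general. The substantive step is therefore verifying that the ``excess'' excursion-theoretic contribution $R$ is genuinely independent of $S^X$, which requires a careful use of the regenerative structure of $Z^X\setminus Z^Y$ under the Girsanov measure and the strong Markov property of the coupled process at the zeros of $Y$.
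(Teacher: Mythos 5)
Your first step (nonnegativity of $\phi^Y-\phi^X$ from Theorem \ref{thm:com}) matches the paper, and your reading of ``completely monotone'' as ``Bernstein, i.e.\ the Laplace exponent of some subordinator'' is the intended one. But the heart of your argument is not carried out: the decomposition $S^Y\stackrel{d}{=}S^X+R$ with $R$ a subordinator \emph{independent} of $S^X$ is exactly equivalent to the assertion that $\phi^Y-\phi^X$ is a Bernstein function, and you never construct it --- you explicitly defer ``the substantive step'' of verifying the independence of the residual. So the plan postpones precisely the content of the theorem. Moreover, the pathwise picture you invoke is inaccurate for these diffusions: $0$ is regular for itself, so neither $Z^X$ nor $Z^Y$ has isolated points and $Z^X$ contains no intervals; hence a $Y$-excursion interval is not a ``finite concatenation of excursion intervals of $X$ interspersed with sub-intervals of $Z^X$ on which $X=0$'' (inside one jump of $S^Y$ there are countably many $X$-excursions together with an uncountable, Lebesgue-null set of $X$-zeros), and there are no ``successive returns of $Y$ to $0$'' at which to apply the strong Markov property in the discrete fashion you describe. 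Under the coupling the ``excess'' pieces and the $X$-excursions within a single jump of $S^Y$ are driven by the same Brownian path, so any independence can only be distributional, and establishing it is the entire difficulty --- acknowledged, but not resolved, in your proposal.

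For comparison, the paper takes the route you set aside: it combines $\phi^X\le\phi^Y$ (from Theorem \ref{thm:com}) with Lemma \ref{lem:lap}, where the a.s.\ inclusion $Z^Y\subset Z^X$ and Bertoin's regenerative embedding theorem give that $\phi^Y/\phi^X$ is completely monotone; it then writes $\phi^Y-\phi^X=\bigl(\phi^Y/\phi^X-1\bigr)\phi^X$ and appeals to multiplicative stability properties from \cite{SSV} to conclude that the difference is the Laplace exponent of a subordinator, whence $\mu_Y-\mu_X\ge 0$. You are right that ``completely monotone times Bernstein'' is not Bernstein in general, so if you distrust that multiplicative step the productive move is to justify it using the extra structure available here (both $\phi^X$ and $\phi^Y=\phi^X\cdot(\phi^Y/\phi^X)$ are Bernstein and the ratio is completely monotone), rather than to replace it by an excursion-theoretic decomposition whose key independence claim is left unproved. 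As submitted, your argument establishes only $\phi^X\le\phi^Y$ and does not prove the complete monotonicity/Bernstein assertion, hence does not yield $\mu_X\le\mu_Y$.
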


\begin{proof}
Applying Theorem \ref{thm:com}, we have $S_t^X\leq S_t^Y$, $\P$-a.s.  and so $0\leq\phi^X\leq\phi^Y$. \\
On the other hand, since $b(x)\leq B(x)$,  by  Lemma \ref{lem:lap}, $\phi^Y/\phi^X$ is completely monotone. Combining the two facts,
we see that 
$$
\frac{\phi^Y}{\phi^X}-1\geq 0\text{ is completely monotone.}
$$
Since completely monotone relation is preserved under multiplication (check details in Chapter 1, \cite{SSV}), we would have 
$$
\phi^Y-\phi^X= \left(\frac{\phi^Y}{\phi^X}-1\right)\phi^X\geq0\text{ is completely monotone.}
$$
This says that $\phi^Y-\phi^X$ is the Laplace exponent of some  subordinator $Z$. 
Hence $S^Y$ has the same distribution as the independent sum of two subordinators $S^X$ and $Z$.
Denote by $\nu$ the L\'evy measure for $Z$. 
It follows then $\mu_Y  - \mu_X =\nu\geq 0$. 
\qed
\end{proof}
 
\section{Properties of non-local operators}
\label{S:4}

We use the same notations as in Example \ref{ex:stablecom}.
For $0<\alpha<1$, let  $X^{(\alpha)}_t$ is a reflected Bessel process on $[0,\infty)$
with the local generator
$$
\op^{(\alpha)}=\frac{1}{2}\frac{d^2}{d x^2}+\frac{1-2\alpha}{2x}\frac{d}{d x}.
$$
As we noted earlier, the inverse local time $S^{(\alpha)}_t$ is an $\alpha$-stable subordinator, 
with the Laplace exponent
\begin{equation}
\label{eqn:stable}
\phi^{(\alpha)}(\lambda)=c_\alpha\lambda^{\alpha}.
\end{equation}
We know from Theorem \ref{thm:EG} that under the new probability measure $\Q$ defined by \eqref{e:2.3},
 the inverse local time of the Girsanov transformed diffusion, $X_t^{(\alpha,m)}$, is a relativistic $\alpha$-stable subordinator, with the Laplace exponent
\begin{equation}
\label{eqn:rela}
\phi^{(\alpha,m)}(\lambda)=\phi^{(\alpha)}(\lambda+m)-\phi^{(\alpha)}(m)=c_\alpha\big((\lambda+m)^{\alpha}-m^\alpha\big).
\end{equation}
The new reflected diffusion $X_t^{(\alpha,m)}$ on $[0, \infty)$ has generator 
\begin{equation}
\label{eqn:relabessel}
\op^{(\alpha,m)}=\frac{1}{2}\frac{d^2}{dx^2}+\bigg(\frac{1-2\alpha}{2x}+\frac{\rho'_m(x)}{\rho_m(x)}\bigg)\frac{d}{dx}, \text{ for }x>0,
\end{equation}
where $\rho_m (x):= \Ex_x \left[ \exp (-m T_0 )\right]$ with $T_0$ being the first hitting time of $0$ by $X^{(\alpha)}$. 
By Remark \ref{rmk:EG}, $\rho_m(x)$ is the unique solution to 
$$\begin{cases}
(\op^{(\alpha)}-m)\rho(x)=0,\\
\rho(0)=1;\ \rho(\infty)=0.
\end{cases}$$
It is know from  ODE, 
$$
\rho_m(x)=\wh c_\alpha \wh K_\alpha(\sqrt{2m}x) , 
$$
where $\wh c_\alpha$ is a normalizing constant depending on $\alpha$ only, $\wh K_\alpha=x^\alpha K_\alpha$ and
\begin{equation}
	\label{eqn:Kalpha}
	K_\alpha(x)=\frac{\pi}{2}\frac{I_{-\alpha}(x)-I_\alpha(x)}{\sin(\alpha\pi)},
\end{equation}
where 
$$I_\alpha(x)=\sum_{n=0}^\infty\frac{1}{n!\Gamma(n+\alpha+1)}\left(\frac{x}{2}\right)^{2n+\alpha}.$$
The function $I_\alpha$ is a   solution to  the following 
 modified Bessel's equation
$$x^2u''(x)+xu'(x)-(x^2+\alpha^2)u=0.
$$ 
Clearly,  $K_\alpha$ also satisfies the above equation,  and is called a modified Bessel function of the second kind.

\begin{example} \label{ex:Cauchy} \rm 
Suppose $\alpha=0.5$. Note that  
\begin{align*}
K_{0.5}(x)=&\frac{\pi}{2}\sum_{n=0}^{\infty}\frac{(\frac{x}{2})^{2n}}{n!}\bigg[\frac{(\frac{x}{2})^{-0.5}}{\Gamma(0.5+n)}-\frac{(\frac{x}{2})^{0.5}}{\Gamma(1.5+n)}\bigg]\\
=&\frac{\pi}{\sqrt{2x}}\sum_{n=0}^\infty\bigg[\frac{(\frac{x}{2})^{2n}}{n!\Gamma(0.5+n)}-\frac{(\frac{x}{2})^{2n+1}}{n!\Gamma(1.5+n)}\bigg]\\
=&\frac{\pi}{\sqrt{2x}}\sum_{n=0}^\infty\bigg[\frac{x^{2n}}{\Gamma(0.5)(2n)!}-\frac{x^{2n+1}}{\Gamma(0.5)(2n+1)!}\bigg]\\
=&\frac{\pi e^{-x}}{\Gamma(0.5)\sqrt{2x}}.
\end{align*}
Thus for $\alpha=0.5$,
$$
\rho_m(x)=\frac{\pi}{\Gamma(0.5)\sqrt{2}}\exp(-\sqrt{2m}x).
$$
Consequently, we have the perturbation part as
$$\frac{\rho'_m(x)}{\rho_m(x)}=-\sqrt{2m}.$$
Hence if $X^{(0.5,m)}_t$ is a reflected process on $[0,\infty)$  with the local generator
$$\op^{(0.5,m)}=\frac{1}{2}\frac{d^2}{d x^2}-\sqrt{2m}\frac{d}{d x},$$
then its inverse local time at $0$ is a relativistic Cauchy subordinator  with the Laplace exponent
$$
\phi^{(0.5,m)}(x)=c(\sqrt{\lambda+m}-\sqrt{m}).
$$
\end{example}

\begin{example}
Now if we operate another Girsanov transform on $\op^{(\alpha,m)}$, that is
$$\op^{(1)}=\op^{(\alpha,m)}+\frac{q'_n(x)}{q_n(x)}\frac{d}{d x},$$
where $q_n(x)$ is the unique solution to 
$$\begin{cases}
(\op^{(\alpha,m)}-n)q_n(x)=0;\\
q_n(0)=1,\ q_n(\infty)=0.
\end{cases}$$
Then the inverse local time at $0$, $S_t^{(1)}$, of the new reflecting diffusion generated by the above generator, has the Laplace exponent
$$\phi^{(1)}(\lambda)=\phi^{(\alpha,m)}(\lambda+n)-\phi^{(\alpha,m)}(n)=c_\alpha\big((\lambda+m+n)^\alpha-(m+n)^\alpha\big).$$
It can also be viewed as the Laplace exponent of a relativistic $\alpha$-stable subordinator with mass $m+n$, which is obtained as the inverse local time at $0$ for a reflecting diffusion determined locally by 
$$\op^{(\alpha,m+n)}=\op+\frac{\rho'_{m+n}(x)}{\rho_{m+n}(x)}\frac{d}{d x}.$$
The two generators should be the same, so we get the relation
$$\frac{\rho'_m(x)}{\rho_m(x)}+\frac{q'_n(x)}{q_n(x)}=\frac{\rho'_{m+n}(x)}{\rho_{m+n}(x)}$$
\end{example}

\begin{remark} S. Watanabe \cite{W} has defined a conservative diffusion process $\wt X_t$ on $[0,\infty)$, determined by the local generator in the same form
	$$\wt\op=\frac{1}{2}\frac{d^2}{dx^2}+\left(\frac{1-2\alpha}{2x}+\frac{\rho_c'(x)}{\rho_c(x)}\right)\frac{d}{dx},$$
but with $\rho_c(x)$ as the unique solution to
$$\begin{cases}
(\op^{(\alpha)}-c)\rho(x)=0;\\
\rho(0)=1,\ \rho'(0)=0.
\end{cases}$$	
Thus, the generator can be written as
$$\wt\op u=\frac{1}{\rho_c}(\op^{(\alpha)}-c)(\rho_c\cdot u)(x),$$
and this yiels an explicit expression of the transition density for $X_t^{(\alpha,c)}$
$$\wt p(t,x,y)=\frac{e^{-mt}p^{(\alpha)}(t,x,y)}{\rho_c(x)\rho_c(y)},\ x,y\geq 0,$$
where $p^{(\alpha)}(t,x,y)$ is the transition density of the Bessel process $X_t^{(\alpha)}$ with respect to the measure $m^{(\alpha)}(dx)=x^{1-2\alpha}dx$.
\end{remark}
We continue to discuss the reflecting diffusions, $X_t^{(\alpha,m)}$, which is locally determined by the generator (\ref{eqn:relabessel}). Consider the drift term $\frac{\rho'_m(x)}{\rho_m(x)}$, because $K'_\alpha(x)=-\frac{\alpha}{x}K_\alpha(x)-K_{\alpha-1}(x)$
\begin{align}
\label{eqn:rho}
\frac{\rho'_m(x)}{\rho_m(x)}=&\sqrt{2m}\frac{\wh K'_\alpha(\sqrt{2m}x)}{\wh K_\alpha(\sqrt{2m}x)}\notag\\
=&\frac{\alpha}{x}+\sqrt{2m}\frac{K'_\alpha(\sqrt{2m}x)}{K_\alpha(\sqrt{2m}x)}\notag\\
=&\frac{\alpha}{x}+\sqrt{2m}\frac{-\frac{\alpha}{\sqrt{2m}x}K_\alpha(\sqrt{2m}x)-K_{\alpha-1}(\sqrt{2m}x)}{K_\alpha(\sqrt{2m}x)}\notag\\
=&-\sqrt{2m}\frac{K_{\alpha-1}(\sqrt{2m}x)}{K_{\alpha}(\sqrt{2m}x)}.
\end{align}
We will focus on the asymptotic behaviors of $\frac{\rho'_m(x)}{\rho_m(x)}$ near $0$ and $\infty$ and have the following lemma:
\begin{lemma} For $m\geq 0$, $0<\alpha<1$,
\begin{equation}
\label{eqn:asymp}
\frac{\rho'_m(x)}{\rho_m(x)}=-\sqrt{2m}\frac{K_{\alpha-1}(\sqrt{2m}x)}{K_{\alpha}(\sqrt{2m}x)}\sim\begin{cases}
-\frac{m^\alpha\Gamma(1-\alpha)}{2^{\alpha-1}\Gamma(\alpha)}x^{2\alpha-1}\text{ as }x\to 0+;\\
 -\sqrt{2m}\text{ as }x\to\infty,
\end{cases}
\end{equation}
where $\sim$ means the ratio between two sides approaches $1$ as $x$ goes to $0+$ or $\infty$.
\end{lemma}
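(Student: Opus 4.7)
The plan is to reduce this to standard asymptotic expansions for the modified Bessel function $K_\nu$ of the second kind and then track the constants carefully.

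First, I would recall the two well-known asymptotic formulas. As $z\to 0^+$, for $\nu>0$ one has $K_\nu(z)\sim \tfrac{\Gamma(\nu)}{2}(z/2)^{-\nu}$, combined with the symmetry $K_{-\nu}(z)=K_\nu(z)$ this handles $K_{\alpha-1}=K_{1-\alpha}$ for $0<\alpha<1$. As $z\to\infty$, for every $\nu\in\mathbb R$ one has $K_\nu(z)\sim \sqrt{\pi/(2z)}\,e^{-z}$. Both expansions can be read off directly from the series definition of $I_\nu$ in \eqref{eqn:Kalpha} (for the behavior near $0$, only the $n=0$ term in $I_{-\nu}$ survives in the leading order) and from the classical saddle-point analysis of the integral representation $K_\nu(z)=\tfrac12\int_0^\infty t^{\nu-1}e^{-z(t+1/t)/2}\,dt$ (for the behavior at $\infty$).

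For the behavior at $\infty$, the $\nu$-independence of the leading asymptotics immediately gives
\[
\frac{K_{\alpha-1}(\sqrt{2m}\,x)}{K_\alpha(\sqrt{2m}\,x)}\longrightarrow 1\qquad\text{as } x\to\infty,
\]
so that $\rho_m'(x)/\rho_m(x)\sim -\sqrt{2m}$, as claimed.

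For the behavior at $0^+$, I would substitute the small-argument asymptotics with $z=\sqrt{2m}\,x$: the numerator is
\[
K_{\alpha-1}(z)=K_{1-\alpha}(z)\sim \tfrac{\Gamma(1-\alpha)}{2}(z/2)^{-(1-\alpha)}=\tfrac{\Gamma(1-\alpha)}{2}\,2^{1-\alpha}z^{\alpha-1},
\]
and the denominator is $K_\alpha(z)\sim \tfrac{\Gamma(\alpha)}{2}\,2^\alpha z^{-\alpha}$. Taking the ratio and multiplying by $-\sqrt{2m}=-z/x$ gives
\[
-\sqrt{2m}\,\frac{K_{\alpha-1}(z)}{K_\alpha(z)}\sim -\frac{\Gamma(1-\alpha)}{\Gamma(\alpha)}\,2^{1-2\alpha}\sqrt{2m}\,z^{2\alpha-1}
=-\frac{\Gamma(1-\alpha)}{\Gamma(\alpha)}\,2^{1-2\alpha}(2m)^{\alpha}\,x^{2\alpha-1},
\]
which simplifies to $-\dfrac{m^\alpha\,\Gamma(1-\alpha)}{2^{\alpha-1}\Gamma(\alpha)}\,x^{2\alpha-1}$, matching \eqref{eqn:asymp}.

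There is no real obstacle here: the whole argument is bookkeeping on the two standard $K_\nu$ asymptotics. The only point that deserves a line of comment is why $K_{\alpha-1}$, whose index $\alpha-1$ is negative, still obeys the $\Gamma(\nu)(z/2)^{-\nu}/2$ formula — which it does via the parity $K_{-\nu}=K_\nu$, so one should write $K_{\alpha-1}=K_{1-\alpha}$ with $1-\alpha\in(0,1)$ before applying the expansion. Everything else is algebra with powers of $2$ and $m$.
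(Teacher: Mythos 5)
Your proof is correct and follows essentially the same route as the paper: both arguments plug the standard small-argument asymptotics of $K_\nu$ (with the dominant term $\tfrac{\Gamma(1-\alpha)}{2}(z/2)^{\alpha-1}$ for index $\alpha-1$, which you obtain via $K_{\alpha-1}=K_{1-\alpha}$ while the paper reads it off the two-term series for non-integer order) and the $\nu$-independent large-argument asymptotics $K_\nu(z)\sim\sqrt{\pi/(2z)}\,e^{-z}$ into the ratio $-\sqrt{2m}\,K_{\alpha-1}(\sqrt{2m}x)/K_\alpha(\sqrt{2m}x)$, and your constant bookkeeping matches \eqref{eqn:asymp}.
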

\proof When $x\to 0+$ and $\nu\notin\Z$, $K_\nu(x)$ has the following series expansion:
\begin{align*}
K_\nu(x)\propto \frac{1}{2}\bigg(\Gamma&(\nu)\Big(\frac{x}{2}\Big)^{-\nu}\Big(1+\frac{x^2}{4(1-\nu)}+\frac{x^4}{32(1-\nu)(2-\nu)}+\cdots\Big)\\
&+\Gamma(-\nu)\Big(\frac{x}{2}\Big)^\nu\Big(1+\frac{x^2}{4(\nu+1)}+\frac{x^4}{32(\nu+1)(\nu+2)}+\cdots\Big)\bigg).
\end{align*}
Since $0<\alpha<1$, $\alpha,\alpha-1\notin\Z$ in (\ref{eqn:rho}), as $x\to 0+$, 
\begin{align}
\label{eqn:asym0}
\frac{\rho'_m(x)}{\rho_m(x)}\sim&-\sqrt{2m} \frac{\Gamma(1-\alpha)\Big(\frac{\sqrt{2m}x}{2}\Big)^{\alpha-1}}{\Gamma(\alpha)\Big(\frac{\sqrt{2m}x}{2}\Big)^{-\alpha}}\notag\\
\sim&-\frac{m^\alpha\Gamma(1-\alpha)}{2^{\alpha-1}\Gamma(\alpha)}x^{2\alpha-1}.
\end{align}
When $x\to\infty$, $K_\nu(x)$ can be described as the following formula:
$$K_\nu(x)\propto \sqrt{\frac{\pi}{2}}\frac{e^{-x}}{\sqrt{x}}\bigg(1+O\Big(\frac{1}{x}\Big)\bigg).$$
The asymptotic behavior near $\infty$ is independent of the index $\nu$. Thus, as $x\to\infty$
\begin{equation}
\label{eqn:asyminf}
\frac{\rho'_m(x)}{\rho_m(x)}\sim -\sqrt{2m}.
\end{equation}
\qed 

We are now in the position to present the proof for   Theorem \ref{T:1.1}.

\noindent\textbf{Proof of Theorem \ref{T:1.1}.} $Y_t$ is a reflecting diffusion process, determined by the local generator
$$\op=\frac{1}{2}\frac{d^2}{d x^2}+\left(\frac{1-2\alpha}{2x}-f(x)\right)\frac{d}{d x},$$
and there exists a constant $c_1$ such that 
$$0\leq f(x)\leq c_1(1\wedge x)^{2\alpha-1}.
$$
Now we check the condition (\ref{con:Gir}) in Theorem \ref{thm:com}, $f(x)$ is bounded when $1/2\leq\alpha<1$, so the condition is naturally satisfied. When $0<\alpha< 1/2$, for a fixed $T>0$, with $p^{(\alpha)}(t,x,y)$ representing the transition density of a Bessel process of index $\alpha$ with respect to the measure $m^{(\alpha)}(dx)=2x^{1-2\alpha}dx$, 
\begin{align*}
\sup_{x>0}\Ex_x\bigg[\int_0^T|f(X^{(\alpha)}_t)|^2\rd t\bigg]\leq&c_1\sup_{x>0}\int_{(0,\infty)}\bigg(\int_0^T p^{(\alpha)}(t,x,y)(1\vee y^{4\alpha-2})d t\bigg)m^{(\alpha)}(dy)\\
\leq&c_1T+c_1\sup_{x>0}\int_0^1\bigg(\int_0^T \frac{x^\alpha y^{3\alpha-1}}{t}\exp\left(-\frac{x^2+y^2}{2t}\right)I_{-\alpha}\Big(\frac{xy}{t}\Big)d t\bigg)dy\\
=&c_1T+c_1\sup_{x>0}\int_0^1\int_{xy/T}^\infty \frac{x^\alpha y^{3\alpha-1}}{s}\exp\left(-\frac{(x^2+y^2)s}{2xy}\right)I_{-\alpha}(s)dsdy\\
\leq&c_1T+c_1\sup_{x>0}\int_0^1x^\alpha y^{3\alpha-1}\bigg(\int_{xy/T}^\infty \frac{e^{-s}}{s}I_{-\alpha}(s)ds\bigg)dy,
\end{align*} 
where $I_{-\alpha}(s)$ is the modified Bessel function of the first kind. Then
$$I_{-\alpha}(s)\propto\begin{cases}
\frac{1}{\Gamma(1-\alpha)}\Big(\frac{s}{2}\Big)^{-\alpha}\bigg(1+\frac{s^2}{4(1-\alpha)}+\frac{s^4}{32(1-\alpha)(2-\alpha)}+\cdots\bigg),\ s\to0;\\
\frac{e^s}{\sqrt{2\pi s}}\bigg(1+O\Big(\frac{1}{s}\Big)\bigg),\ s\to\infty.
\end{cases}$$
Since we have $0<y<1$, by the above asymptotic behavior
$$\int_{xy/T}^\infty \frac{e^{-s}}{s}I_{-\alpha}(s)ds\text{ is dominated by } C_\alpha T^\alpha (xy)^{-\alpha} \text{ as }x\to 0+; \text{ by }C_\alpha T^{1/2}(xy)^{-1/2}\text{ as }x\to\infty.$$
We would then get for $0<\alpha<1/2$
$$\sup_{x>0}\Ex_x\bigg[\int_0^T|f(X^{(\alpha)}_t)|^2\rd t\bigg]<\infty.$$
Thus, one can set up a Girsanov transform between $X_t$ and $X_t^{(\alpha)}$, or, $X_t$ and $X_t^{(\alpha)}$ are absolutely continuous to each other. Applying Theorem \ref{thm:com}, we have $S_t\leq S_t^{(\alpha)}$, $\P$-a.s.

For any $0<\alpha<1$, from the asymptotic behaviors of $\frac{\rho'_m(x)}{\rho_m(x)}$ near $0$ and $\infty$ shown in (\ref{eqn:asymp}), we can always choose a proper value of $m$ such that
$$c_1\leq\sqrt{2m}\wedge\frac{m^\alpha\Gamma(1-\alpha)}{2^{\alpha-1}\Gamma(\alpha)},$$
consequently,
$$0\leq f(x)\leq -\frac{\rho'_m(x)}{\rho_m(x)},\ m\text{ depends on }c_1.$$
Thus, by the classic Comparison theorem, 
\begin{equation}
X_t^{(\alpha,m)}\leq Y_t\leq X_t^{(\alpha)},\ \P\text{-a.s.}
\end{equation}
By Theorem \ref{thm:EG}, $X_t^{(\alpha)}$ and $X_t^{(\alpha,m)}$ are absolutely continuous to each other. Thus, $Y_t$ and $X_t^{(\alpha,m)}$ are absolutely continuous to each other, i.e., there exists a Girsanov transform between them. Applying Theorem \ref{thm:com} again, $S_t^{(\alpha,m)}\leq S_t$, $\P$-a.s.
\qed

To prove Theorem \ref{T:1.2}, we first recall the following result from   Grzywny-Ryznar \cite{GR} (with slightly different notation here).

\begin{thm} \label{thm:GR} {\rm  \cite[Theorem 1.1]{GR} }
Let  $D\subset\R^d$ be a bounded Lipschitz open set. Suppose that $Y_t$ is a symmetric L\'evy  process on $\R^d$ with L\'evy measure  $\nu(x) dx$.
Denote  by $\nu^{(\alpha)}(x)$ the L\'evy density for  the isotropic $\alpha$-stable process $Z$ on $\R^d$. 
Denote by $G_D$ and $G^{(\alpha)}_D$ the Green functions of $Y$ and $Z$  in $D$, respectively. 
Assume that $j(x)=\nu^{(\alpha)}(x)-\nu(x)\geq0$ on $\R^d$, and that $j(x)\leq c|x|^{\rho-d}$ for $|x|\leq 1$, where $c,\rho>0$. Then there exists a constant $C=C(d,\alpha, D, \rho, c)$, such that
$$
C^{-1} G_D^{(\alpha)}(x,y)\leq G_D(x,y)\leq CG_D^{(\alpha)}(x,y) \quad 
\hbox{for all } x,y\in D.
$$
\end{thm}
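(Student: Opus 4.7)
The plan is to compare the Green functions via a perturbation identity coming from the decomposition $\nu^{(\alpha)} = \nu + j$, and to control the perturbation using sharp 3G estimates for the isotropic $\alpha$-stable process in bounded Lipschitz domains. The main potential-theoretic tool is a Khas'minskii-type resolvent expansion.

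First I would note that, since $j\geq 0$, the isotropic stable process $Z$ admits the distributional decomposition $Z_t \stackrel{d}{=} Y_t + W_t$, where $W$ is an independent symmetric L\'evy process on $\R^d$ with L\'evy density $j$. At the level of generators, if $L^{(\alpha)}, L^Y, L^W$ denote the infinitesimal generators, then $L^{(\alpha)} = L^Y + L^W$. Integrating this operator identity against the killed resolvents in $D$ (equivalently, applying Dynkin's formula to $G_D^{(\alpha)}(\cdot,y)$ stopped at the exit of $D$ for $Y$) yields a perturbation formula
\begin{equation*}
G^{(\alpha)}_D(x, y) - G_D(x, y) = \int_D \int_{\R^d} G_D(x, z)\, j(z - w)\, \bigl[\, G^{(\alpha)}_D(w, y)\mathbf{1}_D(w) - G^{(\alpha)}_D(z, y) \,\bigr] \, dw\, dz,
\end{equation*}
which I would interpret as the first term of a Neumann series expressing $G^{(\alpha)}_D$ in terms of $G_D$ and the jump kernel $j$.

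The crux is bounding this perturbation integral by a constant multiple of $G^{(\alpha)}_D(x, y)$. For this I would invoke the 3G inequality for the isotropic stable Green function in bounded Lipschitz domains,
\begin{equation*}
\frac{G^{(\alpha)}_D(x, z)\, G^{(\alpha)}_D(z, y)}{G^{(\alpha)}_D(x, y)} \lesssim \frac{1}{|x - z|^{d - \alpha}} + \frac{1}{|y - z|^{d - \alpha}},
\end{equation*}
combined with the hypothesis $j(h) \leq c|h|^{\rho - d}$ for $|h|\leq 1$ and integrability of $j$ at infinity (which follows from $j \leq \nu^{(\alpha)}$). The strict positivity $\rho > 0$ is exactly what is needed for the resulting double integral to converge uniformly in $x, y \in D$. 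A Khas'minskii-type iteration of the perturbation identity then yields the two-sided comparison $C^{-1} G^{(\alpha)}_D \leq G_D \leq C G^{(\alpha)}_D$ with a constant depending only on $d, \alpha, D, \rho,$ and $c$.

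The main obstacle will be the 3G inequality itself on a merely Lipschitz (as opposed to $C^{1,1}$) domain; establishing it requires the boundary Harnack principle for the stable process together with sharp interior and boundary Green function estimates, pieced together so that the constants depend only on the Lipschitz character of $D$. A secondary subtlety is that the integrand in the perturbation identity is not of one sign, so the Neumann series does not produce two-sided bounds automatically; one handles this by symmetrizing the identity using that $j$ is even and by showing that the perturbation operator has small enough operator norm on an appropriate weighted space so that both the forward and the inverse Neumann series converge.
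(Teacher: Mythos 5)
This statement is not proved in the paper at all: it is quoted verbatim from Grzywny--Ryznar \cite[Theorem 1.1]{GR} and used purely as a black box in the proof of Theorem \ref{T:1.2}. So there is no internal proof to compare yours against. For what it is worth, your outline does follow the general strategy of the cited source: a perturbation (Neumann-series) identity for the killed Green function coming from the decomposition $\nu^{(\alpha)}=\nu+j$, with the perturbation term controlled by a 3G inequality and the hypothesis $j(h)\leq c|h|^{\rho-d}$, $\rho>0$.

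Two points in your plan would need real repair if you were to carry it out. First, the form of the 3G inequality you invoke, with the exponent $d-\alpha$ on both terms, is the $C^{1,1}$ version; for merely Lipschitz $D$ one only has a \emph{generalized} 3G theorem (with a different, domain-dependent exponent coming from the boundary Harnack principle), and the convergence of your double integral has to be checked against that weaker form --- this is where the Lipschitz hypothesis actually bites, and it is the technical core of \cite{GR}. Second, your fallback of "showing that the perturbation operator has small enough operator norm" cannot work as stated: the operator $f\mapsto\int_D G^{(\alpha)}_D(\cdot,z)\,(\cdots)\,dz$ built from $j$ is not small in norm on a fixed bounded domain (if it were, the comparison constant $C$ would be close to $1$, which is false in general). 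The standard way to get two-sided comparability of the full series with its first term is the conditional gauge theorem: Khas'minskii's lemma gives smallness only on subdomains of small diameter or small volume, and one then patches these local estimates together using the Harnack principle. Your mention of a "Khas'minskii-type iteration" points in the right direction, but the patching step is an essential missing ingredient, not a routine one.
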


\bigskip

\noindent\textbf{Proof of Theorem \ref{T:1.2}.} 
Denote the Laplace exponents of $S_t$, $S_t^{(\alpha)}$, $S_t^{(\alpha,m)}$, by $\phi(\lambda)$, $\phi^{(\alpha)}(\lambda)$, $\phi^{(\alpha,m)}(\lambda)$, respectively.
Now applying Theorem \ref{T:3.4}, 
$$\phi(\lambda)-\phi^{(\alpha,m)}(\lambda)\text{ and } \phi^{(\alpha)}(\lambda)-\phi(\lambda)\text{ are completely monotone.}$$
Denote $\nu^{(\alpha,m)}, \nu,\nu^{(\alpha)}$ as \Levy measures of inverse local times respectively, then 
$$\nu^{(\alpha)}-\nu\geq 0;\ \nu-\nu^{(\alpha,m)}\geq 0$$
and for any $0<\alpha<1$, $t>0$,
\begin{equation}
\label{eqn:mea}
0\leq(\nu^{(\alpha)}-\nu)(t)\leq (\nu^{(\alpha)}-\nu^{(\alpha,m)})(t)\leq c_\alpha\frac{1-e^{-mt}}{t^{\alpha+1}}.
\end{equation}
Thus, for $|x|\leq1$, the difference between \Levy measures of trace processes,  $B_{S_t}, B_{S_t^{(\alpha)}}$, would be
\begin{align*}
j(x)=&c_\alpha\int_0^\infty (4\pi t)^{-d/2}e^{-\frac{|x|^2}{4t}}(\nu^{(\alpha)}-\nu)(t)dt\\
=&c_\alpha\int_0^\infty (4\pi t)^{-d/2}e^{-\frac{|x|^2}{4t}}\frac{1-e^{-mt}}{t^{\alpha+1}}dt\\
\leq&c_\alpha\int_0^\infty (4\pi t)^{-d/2}e^{-\frac{|x|^2}{4t}}\frac{mt}{t^{\alpha+1}}dt\\
=&c_\alpha\pi m^{-d/2}4^{\alpha-1}|x|^{-d+2-2\alpha}\int_0^\infty s^{d/2+\alpha-2}e^{-s}ds\\
\leq &C|x|^{-d+2-2\alpha},
\end{align*}
where $C=C(\alpha,m,d,c_\alpha)$. The second to the last equality is obtained by doing change of variables $s=|x|^2/(4t)$. From the proof of Theorem \ref{T:1.1}, $m$ is fixed once given a $f(x)$. 

Applying Theorem \ref{thm:GR} with $\rho=2-2\alpha>0$, we conclude that there exists a constant $C_1=C(d,\alpha, D,C)$ such that
$$C_1^{-1}G_D^{(2\alpha)}(x,y)\leq G_D(x,y)\leq C_1G_D^{(2\alpha)}(x,y)
$$
for all $x,y\in D$.\qed

\bigskip

{\bf Acknowledgement.} We thank P. J. Fitzsimmons, M. M.  Meerschaert and Z. Vondracek for helpful discussions.

\vskip 0.3truein 

Department of Mathematics, University of Washington, Seattle,
WA 98195, USA

Email: \texttt{zqchen@uw.edu}

Email: \texttt{lidanw@uw.edu}

\end{document}